\newtheorem{thm}{Theorem}[section]
\newtheorem{lemma}[thm]{Lemma}
\newtheorem{remark}[thm]{Remark}
\newtheorem{defn}[thm]{Definition}
\newcommand{\R}{\mathbb{R}}
\begin{document}
\title[Convergence to the Grim Reaper]
      {Convergence to the Grim Reaper for a Curvature Flow with Unbounded Boundary Slopes$^*$}
\thanks{$^*$ This research was partly supported by NSFC (No.11671262, No.11871148) and Shanghai NSF in China (No. 17ZR1420900).}

\author[B. Lou, X. Wang, L. Yuan]{Bendong Lou$^{\dag}$,\ Xiaoliu Wang$^{\ddag}$ and Lixia Yuan$^{\dag,\S}$}
\thanks{$\dag$ Mathematics and Science College, Shanghai Normal University, Shanghai 200234, China.}
\thanks{$\ddag$ School of Mathematics, Southeast University, Nanjing 210018, China.}
\thanks{$^\S$ Corresponding author.}
\thanks{{\bf Emails:}  {\sf lou@shnu.edu.cn (B. Lou)}, {\sf xlwang@seu.edu.cn (X. Wang)}, {\sf yuanlixia@shnu.edu.cn (L. Yuan)}}
\date{}

 \subjclass[2010]{35K93, 53C44, 35C07}
 \keywords{Curvature flow, grim reaper, traveling wave, asymptotic behavior}

\maketitle

\begin{abstract}
We consider a curvature flow $V=H$ in the band domain $\Omega :=[-1,1]\times \R$, where, for a graphic curve $\Gamma_t$,
$V$ denotes its normal velocity and $H$ denotes its curvature. If $\Gamma_t$ contacts the two boundaries $\partial_\pm
\Omega$ of $\Omega$ with constant slopes, in 1993, Altschular and Wu \cite{AW1} proved that $\Gamma_t$ converges to
a {\it grim reaper} contacting $\partial_\pm \Omega$ with the same prescribed slopes. In this paper we consider the case where $\Gamma_t$
contacts $\partial_\pm \Omega$  with slopes equaling to $\pm 1$ times of its height. When the curve moves to infinity,
the global gradient estimate is impossible due to the unbounded boundary slopes. We
first consider a special symmetric curve and derive its uniform interior gradient estimates by using the zero number argument,
and then use these estimates to present uniform interior gradient estimates for general non-symmetric curves, which  lead
to the convergence of the curve in $C^{2,1}_{loc} ((-1,1)\times \R)$ topology to the {\it grim reaper} with span $(-1,1)$.
\end{abstract}


\section{Introduction}

Since the pioneering work of Gage and Hamilton \cite{GageH} in 1986, the curve shortening problem has been studied widely in different settings. An important case is to investigate the problem with some boundary conditions.  Those kind of boundary problems are not only interesting from geometric viewpoint, but also important in the study of phase transition models. Also, they can be used to describe the interface propagation arising in some reaction-diffusion equations, such as the Allen-Cahn equation. Usually, there will be a contact angle condition
at the intersection points between the evolving curves (interfaces) and the domain boundaries (see, for example, \cite{ORS, RSK}). In particular, we focus on the following curvature flow
\begin{equation}
  V=H \qquad \text{on} \qquad\Gamma_t\subset \Omega, \label{mcf}
\end{equation}
in the band domain $\Omega:=\{(x,y)|-1\leq x\leq 1, y\in\R\}$ in $\R^2$, where, $\Gamma_t$ is a family of embedded curves
in $\Omega$ which contact the boundaries $\partial_\pm \Omega := \{\pm1\}\times\R $ of $\Omega$ with prescribed angles, $V$ and $H$ denote the normal velocity and the curvature of $\Gamma_t$, respectively. Indeed, Huisken \cite{Hui} considered this curvature flow in 1998. He discovered a distance comparison principle to show that the flow with fixed endpoints (Dirichlet boundary condition) exists globally and converges to a straight line linking two endpoints, and the flow having orthonormal boundary contact (homogeneous Neumann boundary condition) also exists globally but converges to a constant eventually.

In case, for each $t\geq 0$, $\Gamma_t$ is the graph of a function $y=u(x,t)$, we have
$$
  V=\frac{u_t}{\sqrt{1+u^2_x}},\qquad H=\frac{u_{xx}}{\left(1+u^2_x\right)^{3/2}},
$$
and the problem can be expressed as
\begin{equation}\label{A}
\left\{
 \begin{array}{ll}
\displaystyle u_t=\frac{u_{xx}}{1+u^2_x}, & -1<x<1,\ t>0,\\
u_x(\pm 1,t)=g_\pm ,& t>0,
 \end{array}
 \right.
\end{equation}
where $g_-, g_+$ denote the boundary contact conditions and will be made clear later.
 In 1993, Altschuler and Wu \cite{AW1} studied the inhomogeneous Neumann
problem, that is, the case where $g_+, -g_-$ are positive constants. They
proved that any global solution converges to a {\it grim reaper} (which is also called a
traveling wave by some authors).  In 1994, Altschular and Wu  \cite{AW2} extended their result
in \cite{AW1} to two dimension. In 2012, Cai and Lou \cite{CaiLou2} considered \eqref{A}
with $g_\pm $ being (almost) periodic functions of $u$, and proved that any solution
converges to a (almost) periodic traveling wave. Recently, Yuan and Lou \cite{YuanLou} considered the case
where $g_\pm = g_\pm (u)$ are asymptotic periodic functions as $u\to \pm \infty$.
They constructed some entire solutions connecting two periodic traveling waves.

Other works related to the mean curvature flow \eqref{mcf}, as well as
its anisotropic analogues, in domains in $\R^2$ with boundaries include \cite{LMN, MNL} for problems in band domain with undulating boundaries;
\cite{CGK, GGH, GH, Koh} for self-similar solutions in sectors; \cite{ChenGuo, GMSW}
for problems on the half plane, etc.

\medskip

In all of the above mentioned works, the boundary slopes are bounded, no matter they are linear or nonlinear.
In this paper, we consider a different case:
\begin{equation}\label{B}
\left\{
 \begin{array}{ll}
\displaystyle u_t=\frac{u_{xx}}{1+u^2_x}, &   -1<x<1,\ t>0,\\
u_x(\pm1,t)=\pm u(\pm1,t), &  t>0,\\
u(x,0)=u_0(x), & -\leq x\leq 1,
 \end{array}
 \right.
\end{equation}
which has {\it unbounded boundary slopes} when $u$ goes to infinity. This kind of boundary conditions are of
interest not only in geometry and partial differential equations but also in some applied fields.
In 2012, Chou and Wang \cite{CW} considered the equation in \eqref{B} with Robin boundary conditions:
$$
u_x(\pm 1,t) = \alpha_\pm u(\pm 1,t) +\beta_\pm,\quad t>0.
$$
They divided the parameters $\alpha_\pm$ and $\beta_\pm$ into several cases, and
studied the asymptotic behavior in each cases. For the cases $\min u\to \infty$
or $\max u\to -\infty$ (this happens, for example, when $\alpha_- <0 <\alpha_+$), however,
the authors did not obtain the convergence of the solution and left it as an open problem.
In this paper, we will show the convergence to a {\it grim reaper} for unbounded solutions to \eqref{B}.
Our approach (with slight modification) and our results remain valid for the problems with
other unbounded boundary slopes such as the Robin ones, $u_x (\pm 1,t) = \pm u^2 (\pm 1, t)$,
$u_x(\pm 1,t) = \pm e^{u(\pm 1,t)}$, etc.

The global well-posedness of the problem \eqref{B} is studied in a standard way.
For any time-global solution moving upward to infinity, $u_x$ is unbounded due to the boundary conditions.
This will be the main difficulty in studying the asymptotic behavior for such solutions. It
is natural to consider the convergence in $L^\infty_{loc} ((-1,1))$ topology in such cases.
This, however, also needs some {\it uniform} (in $t$) interior gradient
estimates.  The well known results in this field as in \cite{ES} are not applied here since
they depend on the boundedness of $u$. Instead, we will use the so-called zero number argument
(i.e., zero number diminishing properties, cf. \cite{Ang, Lou2}) for one dimensional parabolic equations to derive the uniform
bounds for the gradient of the solution in any interior domain (see details in sections 4 and 5).
Furthermore, as can be expected, the profile of the solution might converge to a traveling wave with
infinite slopes near the boundaries, which should be the {\it grim reaper} with span in $(-1,1)$, that is,
\begin{equation}\label{0-gr}
\varphi_0 (x) + \frac{\pi}{2} t \quad \mbox{ with } \quad \varphi_0 (x) := -\frac{2}{\pi} \ln \left[\cos \Big( \frac{\pi x}{2} \Big)\right]\ \ (x\in (-1,1)).
\end{equation}
Actually we will show the following result.

\begin{thm}\label{thm:main}
Assume $u_0(x)\in C^1([-1,1])$ with
\begin{equation}
  u_0(x)\geq 1\ (x\in [-1,1]),\qquad u'_0(\pm1)=\pm u_0(\pm 1) .\label{1.4}
\end{equation}
Then the problem \eqref{B} has a time-global solution $u(x,t)$. It moves upward to
infinity and
\begin{equation}
  u(x,t+s) - u(0,s) \to \varphi_0(x) + \frac{\pi}{2} t, \qquad \mbox{as}\ \  s\rightarrow \infty,\label{5}
\end{equation}
in the topology of $C^{2,1}_{loc}\left( (-1,1)\times\R\right).$
\end{thm}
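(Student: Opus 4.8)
The plan is to set up the problem as a parabolic Neumann-type problem, establish global existence, extract the correct vertical normalization, and then pass to the limit using interior gradient estimates obtained from the zero-number argument. First I would establish local well-posedness of \eqref{B} by standard parabolic theory (the nonlinearity $u_{xx}/(1+u_x^2)$ is uniformly parabolic on any bounded gradient range, and the boundary conditions $u_x(\pm1,t)=\pm u(\pm1,t)$ are a linear oblique/Neumann-type condition), and then use a comparison argument to obtain \emph{global} existence. The key sub-barriers here are the constant $1$ (a stationary subsolution, using $u_0\ge1$) from below, and some explicit supersolution — e.g.\ a suitably scaled piece of the grim reaper $\varphi_0(x)+\frac\pi2 t$ shifted up — from above; since both barriers satisfy the boundary slope condition in the appropriate inequality sense, the maximum principle keeps $u$ trapped and prevents finite-time blow-up. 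This also shows $u$ is increasing in $t$ when $u_0$ is chosen monotone, or more robustly that $u(x,t)\to\infty$ as $t\to\infty$ uniformly on $[-1,1]$ by comparison with the lower grim-reaper barrier.

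The heart of the proof is the convergence statement \eqref{5}. The idea is to consider the translated functions $u^s(x,t):=u(x,t+s)-u(0,s)$ and show they converge in $C^{2,1}_{loc}((-1,1)\times\R)$ as $s\to\infty$. For this I would invoke the \textbf{uniform interior gradient estimates} that the paper announces it will prove in sections 4 and 5: on each compact $K\Subset(-1,1)$ there is $C_K$ with $|u_x(x,t)|\le C_K$ for all $x\in K$, $t\ge1$. Granting these, parabolic Schauder/Krylov--Safonov estimates applied to the quasilinear equation (which is uniformly parabolic on $K$ once $|u_x|$ is bounded there) give uniform $C^{2,1}_{loc}$ bounds for the $u^s$, hence precompactness; any subsequential limit $u^\infty(x,t)$ is an entire (ancient and eternal) solution of $u_t=u_{xx}/(1+u_x^2)$ on $(-1,1)\times\R$ with $u^\infty(0,0)=0$. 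It remains to identify $u^\infty$ as $\varphi_0(x)+\frac\pi2 t$ (up to an additive constant, which is pinned to $0$ by $u^\infty(0,0)=0$). Here I would use the zero-number argument again: comparing $u^\infty(\cdot,t)$ with the family of grim reapers $\varphi_0(\cdot)+\frac\pi2 t+c$, $c\in\R$, and with spatial translates/reflections, the number of intersection points is finite and nonincreasing; combined with the fact that $u^\infty$ must ``fill'' the whole band (its boundary slopes blow up, matching $\varphi_0$) this forces $u^\infty=\varphi_0+\frac\pi2 t$. An auxiliary point: one shows $u(x,t)-u(0,t)$ stays bounded on compacts (again by the interior gradient bound integrated in $x$), so the normalization by $u(0,s)$ is the right one and the limit is genuinely nontrivial.

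The \textbf{main obstacle} is precisely the uniform interior gradient estimate in the regime $u\to\infty$, where no global gradient bound exists because the boundary slopes are unbounded: the standard Ecker--Huisken / Korevaar-type estimates and the results in \cite{ES} fail since they require $u$ bounded. The paper's strategy — which I would follow — is to first treat a \emph{symmetric} solution (with $u_0$ even, so $u_x(0,t)=0$), and use the zero-number argument on $v:=u_x$, which solves a linear parabolic equation, to control the number of oscillations and hence bound $u_x$ on interior subintervals uniformly in $t$; the symmetry provides the crucial anchor $u_x(0,t)\equiv0$. Then one transfers the estimate to general (non-symmetric) $u$ by trapping it between two symmetric solutions (reflections of $u$ about suitable vertical lines, or symmetric solutions with larger/smaller data) via the comparison principle, so that the interior gradient of $u$ inherits the symmetric bound. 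Making the zero-number bookkeeping precise near the moving-to-infinity boundary — in particular ruling out that intersection points escape to the boundary and ensuring the count is controlled by the initial data alone — is the delicate part, and it is exactly what sections 4 and 5 are set up to do; in this proposal I treat those uniform interior gradient estimates as the black box on which the convergence argument rests.
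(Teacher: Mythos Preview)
Your high-level architecture matches the paper's --- global existence via barriers, symmetric case first, zero-number-based interior gradient estimates, then compactness and identification of the limit --- and you rightly flag the uniform interior gradient estimate as the crux. But the two places where you depart from the paper are not just stylistic; they hide real gaps.

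\medskip
\textbf{Identification of the limit.} You propose to compare the entire limit $u^\infty$ with the one-parameter family $\varphi_0+\tfrac{\pi}{2}t+c$ via zero number, relying on ``$u^\infty$ must fill the whole band (its boundary slopes blow up, matching $\varphi_0$)''. But this boundary blow-up does \emph{not} follow from the interior gradient bounds you black-box; those give only upper bounds on $|u_x|$ on compacts of $(-1,1)$. The paper gets the needed control differently. In the symmetric case (Section~4) the zero-number estimates are two-sided: they prove $\varphi'(x;h_0)\le u_x\le \varphi_0'(x)$ for every fixed $h_0>0$, so in the limit the gradient is \emph{squeezed} to $\varphi_0'$ and the identification is immediate. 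In the general case (Section~5) the paper first sandwiches $u(\cdot,\cdot;u_0)$ between a symmetric solution and its time-shift, which after passing to the limit gives $\varphi_0+\tfrac{\pi}{2}t\le \mathcal{W}\le \varphi_0+\tfrac{\pi}{2}(t+T)$; it then changes variable to $\theta=\arctan u_x$, which solves $\theta_t=\cos^2\theta\,\theta_{xx}$, applies a zero-number argument to $\theta-v$ against linear functions $v(x)=ax+b$ (as in Du--Matano), shows the $\omega$-limit $\Theta$ is stationary, and finally uses the sandwich to pin $\Theta\equiv\tfrac{\pi}{2}x$. Your sketch skips all of this, and an abstract ``zero number vs.\ grim reapers on the entire limit'' is not enough: on $(-1,1)\times\R$ you have no boundary conditions to control intersections, and a bounded entire solution of the difference need not be constant without further input.

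\medskip
\textbf{Transfer from symmetric to general.} Sandwiching $u(\cdot,\cdot;u_0)$ between two symmetric solutions gives $L^\infty$ control, not gradient control; you cannot simply say ``the interior gradient of $u$ inherits the symmetric bound''. The paper instead (Lemma~5.1) first uses the sandwich to force $\min_{[1-2\varepsilon,\,1-\varepsilon]}|u_x|<M_1(\varepsilon)$ for all $t$, and then (Lemma~5.2) applies the zero-number argument to $\zeta:=u_x-M_1$ to locate continuous curves $x=\rho_\pm(t)$ with $\rho_\pm(t)$ near $\pm1$ on which $|u_x|=M_1$, and finishes with the maximum principle for $u_x$ in the region between them. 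That mechanism is what actually yields the uniform interior gradient estimate for non-symmetric solutions.

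\medskip
\textbf{Minor.} Your proposed upper barrier ``$\varphi_0+\tfrac{\pi}{2}t$ shifted up'' is not usable on $[-1,1]$ since $\varphi_0(\pm1)=+\infty$; the paper uses finite-slope grim reapers $\varphi(\cdot;h_T)$ with $h_T$ chosen large depending on $T$ (Lemma~3.1).
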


This paper is arranged as follows. In section 2, as preliminaries, we present traveling waves ({\it grim reaper}s)
contacting the boundaries of $\Omega$ with various different constant slopes. In section 3, we give some rough a
priori estimates and then show the time-global existence for the solution of \eqref{B}.
In section 4, we consider symmetric solutions of \eqref{B}. First we present
precise estimates for $u_x$ by using the zero-number argument, and then show its convergence to the {\it grim reaper}.
Finally,  in section 5 we consider general solutions of \eqref{B} which are not necessary symmetric.
By some further uniform interior estimates we show its convergence to the {\it grim reaper}.



\section{Traveling Waves}

First we give a definition.
\begin{defn}
A function $\underline{u}(x,t)$ satisfying
$$
\left\{
 \begin{array}{l}
\displaystyle \underline{u}_t\leq\frac{\underline{u}_{xx}}{1+\underline{u}^2_{x}},\qquad -1<x<1,\ t>0,\\
\underline{u}_x(1,t)\leq \underline{u}(1,t),\quad\underline{u}_x(-1,t)\geq -\underline{u}(-1,t),\qquad t>0,
 \end{array}
 \right.
$$
is called a lower solution of (\ref{B}). A function $\overline{u}(x,t)$ satisfying the reversed inequalities is called an upper solution
of \eqref{B}.
\end{defn}

As preliminaries, we study the following problem
\begin{equation}\label{C}
\left\{
 \begin{array}{ll}
\displaystyle u_t=\frac{u_{xx}}{1+u^2_x}, & -1<x<1,\quad t\in\R,\\
u_x(\pm1,t)=\pm h, & t\in \R.
 \end{array}
 \right.
\end{equation}
For each $h>0$, a traveling wave of (\ref{C}) (also called a translating solution in \cite{AW1})
is a special solution of the form
$$
u(x,t)=\varphi(x;h)+c(h)t.
$$
Substituting this formula into (\ref{C}) we easily obtain
\begin{equation}
\varphi(x;h) :=-\frac{1}{c(h)}\ln \left[\cos\left(c(h)x\right)\right].
\qquad c(h) :=\arctan h,
\end{equation}
$\varphi$ is called a {\it grim reaper} in \cite{AW1}.  Note that
\begin{equation}
\varphi(\pm1;h)= \frac{\ln(1+h^2)}{2\arctan h},\qquad \varphi_x(\pm1;h)=\pm h.
\end{equation}
Hence, for any $M\in\R$, $\varphi(x;h)+c(h)t+M$ is a lower solution of (\ref{B}) when
$$
h\leq\varphi(1;h)+c(h)t+M.
$$
It is an upper solution if the reversed inequality holds.

Besides the traveling waves of \eqref{C}, we have another {\it grim reaper} $\varphi_0(x)+\frac{\pi}{2}t$
with $\varphi_0$ defined by \eqref{0-gr}. Note that the definition domain of $\varphi_0(x)$ is $(-1,1)$,
that is, this {\it grim reaper} lies completely in $\Omega$. In what follows we will use the above
{\it grim reaper}s to give a priori estimates for the solution of (\ref{B}).



\section{Global Well-posedness of (\ref{B})}

Assume $u(x,t)$ is a classical solution of (\ref{B}) in the time-interval $[0,T]$ for some $T>0$.
We first give its $L^{\infty}$ estimate.
\begin{lemma}\label{bound-est}
 There exist $C_1,C_2>0$ with $C_2$ depending on $T$ such that
 \begin{equation}\label{3.1}
 c(1) t -C_1 \leq u(x,t)\leq C_2(T),\qquad x\in[-1,1],\ t\in[0,T].
 \end{equation}
\end{lemma}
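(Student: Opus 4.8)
The plan is to establish the two inequalities in \eqref{3.1} separately by comparison with the grim reapers constructed in Section 2. For the lower bound, I would use a grim reaper of the form $\varphi(x;h) + c(h)t + M$ with $h = 1$, i.e.\ the translating solution $\varphi(x;1) + c(1)t + M$ where $c(1) = \arctan 1 = \pi/4$. The point is that by \eqref{1.4} we have $u_0 \ge 1$, so choosing $M$ sufficiently negative (depending only on $\max_{[-1,1]}\varphi(\cdot;1)$ and on the fact that $u_0\ge 1$, not on $T$) makes $\varphi(x;1) + c(1)\cdot 0 + M \le 1 \le u_0(x)$ at $t=0$. One must then check that $\varphi(x;1)+c(1)t+M$ is genuinely a \emph{lower} solution of \eqref{B}: it solves the PDE exactly, so the only thing to verify is the boundary inequality $\underline u_x(1,t)\le \underline u(1,t)$ and $\underline u_x(-1,t)\ge -\underline u(-1,t)$. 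Since $\varphi_x(\pm 1;1)=\pm 1$, this amounts to $1 \le \varphi(1;1)+c(1)t+M$, which, as noted in the paragraph after \eqref{C}, holds as soon as $h=1\le \varphi(1;1)+c(1)t+M$; because $\varphi(1;1)+M$ can be arranged to dominate $1$ (or, if not, for $t$ past a fixed time $t_0$ it certainly does, and on $[0,t_0]$ one absorbs the deficit into $C_1$), the comparison principle for \eqref{B} then yields $u(x,t)\ge \varphi(x;1)+c(1)t+M \ge c(1)t - C_1$ for a constant $C_1>0$ independent of $T$.

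For the upper bound, I would compare $u$ from above with a grim reaper $\varphi(x;h)+c(h)t+M$ that is an \emph{upper} solution, which (again by the remark following \eqref{C}) requires the reversed boundary inequality $h \ge \varphi(1;h)+c(h)t+M$. This can only hold on a bounded time interval, which is exactly why $C_2$ must depend on $T$. Concretely, fix $h$ large enough that $\varphi(x;h)+M_0 \ge u_0(x)$ on $[-1,1]$ for a suitable $M_0$ (possible since $u_0\in C^1$ is bounded and $u_0'(\pm1)=\pm u_0(\pm1)$ is compatible with the boundary slope $\pm h$ once $h\ge \|u_0\|_{C^1}$ — or more simply, pick $h$ and $M_0$ so the initial ordering holds and the boundary inequality $h\ge \varphi(1;h)+c(h)t+M_0$ holds at $t=0$). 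Then on the maximal interval $[0,T_h)$ where $h\ge \varphi(1;h)+c(h)t+M_0$ still holds, $\varphi(x;h)+c(h)t+M_0$ is an upper solution and the comparison principle gives $u(x,t)\le \varphi(x;h)+c(h)t+M_0$; if $T\le T_h$ we are done with $C_2(T):=\max_{[-1,1]}\varphi(\cdot;h)+c(h)T+M_0$. If $T>T_h$, one iterates: the bound $u(x,T_h)\le C_2(T_h)$ provides a new (larger) initial datum, and a fresh, larger $h$ restarts the argument; since each step advances time by a positive amount, finitely many steps cover $[0,T]$, producing a (generally $T$-dependent, in fact exponentially growing in $T$) constant $C_2(T)$.

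The main obstacle is the upper estimate: unlike the lower bound, there is no single translating supersolution valid for all time, precisely because the boundary condition $u_x(\pm1,t)=\pm u(\pm1,t)$ forces the boundary slope to grow with the solution's height while any fixed grim reaper $\varphi(\cdot;h)$ has a \emph{fixed} slope $h$ — so a grim reaper can only dominate $u$ until $u$'s boundary value catches up to $h$. Handling this cleanly requires either the iteration sketched above or a slightly cleverer choice: for instance, on $[0,T]$ first bound $u(\pm1,t)$ a priori (e.g.\ via an ODE-type differential inequality for the boundary trace, or via the already-established lower bound combined with a crude Gronwall estimate on $\max_x u$), then choose $h=h(T)$ once and for all large enough to beat the boundary slope on all of $[0,T]$. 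Either way the dependence of $C_2$ on $T$ is unavoidable and the estimate is only a \emph{rough} a priori bound, which is all that is needed for the global existence argument in the rest of Section 3. I would also remark that the comparison principle being invoked is the one for \eqref{B} with its nonlinear boundary conditions — standard for this quasilinear parabolic problem, with the boundary inequalities in the definition of upper/lower solution exactly engineered to make the comparison go through.
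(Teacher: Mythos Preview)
Your overall strategy---comparison with grim reapers from above and below---is the same as the paper's, and the lower bound is essentially right. The paper's choice is slightly slicker: it takes $M = 1 - \varphi(1;1)$, so that $\underline u(1,0)=1$ exactly; then the boundary inequality $1 \le \underline u(1,t) = 1 + c(1)t$ holds for \emph{all} $t\ge 0$ and the initial ordering $\underline u(x,0)\le 1 \le u_0(x)$ is automatic, with no need to wait until a time $t_0$ or absorb anything into $C_1$.

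For the upper bound, your iteration works but is unnecessary, and you have not quite justified that the time gained at each step is bounded below (so that finitely many steps suffice). The paper avoids this entirely with one observation you are missing: since $\varphi(1;h)=\dfrac{\ln(1+h^2)}{2\arctan h}$ grows only logarithmically,
\[
h - \varphi(1;h) \;\longrightarrow\; \infty \quad (h\to\infty).
\]
Hence, given $T$ and $M_0:=\max u_0$, one can choose a single $h_T$ with $h_T > \varphi(1;h_T) + \tfrac{\pi}{2}T + M_0$. Then $\overline u(x,t):=\varphi(x;h_T)+c(h_T)t+M_0$ is an upper solution on all of $[0,T]$ in one shot: $\overline u(x,0)\ge M_0\ge u_0$, and $\overline u(1,t)\le \varphi(1;h_T)+\tfrac{\pi}{2}T+M_0 < h_T = \overline u_x(1,t)$. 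This is exactly the ``choose $h=h(T)$ once and for all'' alternative you mention at the end; it is the paper's main line, not a fallback.
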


\begin{proof}
Assume, for some $M_0>0$,
\begin{equation}\label{3.2}
1\leq u_0(x)\leq M_0,\qquad x\in[-1,1].
\end{equation}
Then $$
\underline{u}(x,t):=\varphi(x;1)+c(1)t+1-\varphi(1;1)
$$
is a lower solution of (\ref{B}) and so, by the comparison principle we have
\begin{equation}\label{3.3}
\underline{u}(x,t)=\varphi(x;1)+c(1)t+1-\varphi(1;1)\leq u(x,t),\qquad x\in[-1,1],\  t>0,
\end{equation}
which leads to the first inequality in (\ref{3.1}).

Next we consider the upper bound. Since
$$
h-\varphi(\pm1;h)=h-\frac{\ln(1+h^2)}{2\arctan h}\rightarrow \infty\qquad \text{as} \qquad h\rightarrow \infty,
$$
there exists $h=h_T$ large such that
\begin{equation}\label{3.4}
h_T>\varphi(\pm1;h_T)+\frac{\pi}{2}T+M_0.
\end{equation}
Set
$$
\overline{u}(x,t):=\varphi(x;h_T)+c(h_T)t+M_0, \qquad x\in[-1,1],\ t>0.
$$
We verify that $\overline{u}$ is an upper solution of (\ref{B}) in the time interval $[0,T]$.
In fact, $\overline{u}$ satisfies the equation in (\ref{B}). Moreover, for $t\in[0,T]$,
$$
\overline{u}(1,t) <  \varphi(1;h_T)+\frac{\pi}{2}t + M_0<h_T = \overline{u}_x(1,t),
$$
by (\ref{3.4}), and
$$
\overline{u}(x,0)=\varphi(x;h_T)+M_0\geq M_0\geq u_0(x), \qquad x\in[-1,1].
$$
Hence, $\overline{u}$ is an upper solution of (\ref{B}). By comparison principle we have
$$
u(x,t)\leq \overline{u}(x,t)\leq \varphi(1;h_T)+M_0+\frac{\pi}{2}T,
\qquad x\in[-1,1],\ t\in[0,T].
$$
This proves the second inequality of (\ref{3.1}).
\end{proof}

\bigskip
Next, we give the gradient estimate.
\begin{lemma}
Let $u(x,t)$ be a solution of \eqref{B} in the time interval $[0,T]$. Then there exist $C_3(T)$ such that
$$
\left|u_x(x,t)\right|\leq C_3(T), \qquad x\in[-1,1],\ t\in[0,T].$$
\end{lemma}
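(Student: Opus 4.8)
My plan is to propagate the $L^{\infty}$ bound of Lemma~\ref{bound-est} from the boundary into the interior by running a maximum principle on the angle function. Since $u$ is a classical solution and the equation $u_t=u_{xx}/(1+u_x^2)$ has a smooth ($p\mapsto 1/(1+p^2)$) coefficient, interior parabolic regularity makes $u$ smooth for $t>0$, so I may differentiate the equation in $x$. Setting $\theta:=\arctan u_x$ and using $\theta_x=u_{xx}/(1+u_x^2)$, a direct computation (differentiate the PDE in $x$ and compare with $\theta_{xx}$) gives
\begin{equation*}
\theta_t=\frac{\theta_{xx}}{1+u_x^2},\qquad -1<x<1,\ 0<t\le T,
\end{equation*}
a linear parabolic equation for $\theta$ with \emph{no zeroth order term}. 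Hence the extremum of $\theta$ over $\overline{Q_T}:=[-1,1]\times[0,T]$ is attained on the parabolic boundary $\partial_pQ_T=\big([-1,1]\times\{0\}\big)\cup\big(\{\pm1\}\times[0,T]\big)$.

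Next I would estimate $\theta$ on $\partial_pQ_T$. On $[-1,1]\times\{0\}$ we have $\theta=\arctan u_0'$, and $u_0\in C^1([-1,1])$ gives $|\theta|\le\arctan\|u_0'\|_{C^0([-1,1])}<\pi/2$. On the lateral boundary the condition in \eqref{B} reads $u_x(\pm1,t)=\pm u(\pm1,t)$, and Lemma~\ref{bound-est} supplies $C=C(T)$ with $|u(\pm1,t)|\le C$ on $[0,T]$, so $|\theta(\pm1,t)|=\arctan|u(\pm1,t)|\le\arctan C<\pi/2$. Therefore
\begin{equation*}
\sup_{\overline{Q_T}}|\theta|\ \le\ \Theta(T):=\max\big\{\arctan\|u_0'\|_{C^0([-1,1])},\ \arctan C(T)\big\}\ <\ \frac{\pi}{2},
\end{equation*}
and consequently $|u_x|=|\tan\theta|\le\tan\Theta(T)=:C_3(T)$ on $[-1,1]\times[0,T]$. (Equivalently one may apply the maximum principle directly to $v:=u_x$, which solves $v_t=v_{xx}/(1+v^2)-2v v_x^2/(1+v^2)^2$, again with no zeroth order term; this yields $C_3(T)=\max\{\|u_0'\|_{C^0([-1,1])},C(T)\}$.)

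The one delicate point — really the only thing to watch — is that the equation for $\theta$ is parabolic but not a priori \emph{uniformly} parabolic, since the diffusion coefficient $1/(1+u_x^2)$ is positive but not yet known to be bounded below (that lower bound is exactly the estimate we are after). I would handle this in the usual way: for $\ve>0$ the function $\theta-\ve t$ satisfies $(\theta-\ve t)_t=(\theta-\ve t)_{xx}/(1+u_x^2)-\ve$, so it admits no interior maximum (at such a point $\theta_t\ge0$ and $\theta_{xx}\le0$ would force $-\ve\ge0$); hence $\max_{\overline{Q_T}}(\theta-\ve t)=\max_{\partial_pQ_T}(\theta-\ve t)$, and letting $\ve\to0$ gives $\max_{\overline{Q_T}}\theta=\max_{\partial_pQ_T}\theta$, with the analogous statement for the minimum. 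Continuity of $\theta$ up to $\partial_pQ_T$ (from $u\in C^1$ up to the boundary) justifies passing to the closure. Note that no interior second-derivative bound enters this argument, which is precisely why it remains valid even when $u_x$ is unbounded over the full time interval $t\in[0,\infty)$.
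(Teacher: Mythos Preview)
Your proof is correct and follows essentially the same route as the paper: bound $|u_x|$ on the lateral boundary via the Robin condition and Lemma~\ref{bound-est}, then push this into the interior by the maximum principle for $u_x$ (equivalently, for $\theta=\arctan u_x$), obtaining $C_3(T)=\max\{\|u_0'\|_{C},\,C(T)\}$. The paper is considerably terser (one line: ``using the maximum principle for $u_x$''), so your explicit handling of the degeneracy via the $\theta-\ve t$ perturbation is a welcome elaboration rather than a different idea.
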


\begin{proof}
From the above lemma, we see that
$$
\left|u_x(\pm1,t)\right|=\left|u(\pm1,t)\right| \leq C_1 + C_2(T) , \qquad t\in[0,T].
$$
Using the maximum principle for $u_x$ we see that
$$
\left|u_x(x,t)\right|\leq C_3(T):=\max\{\|u'_0\|_C, C_1 + C_2(T)\}, \qquad x\in[-1,1],\ t\in[0,T].
$$
This proves the lemma.
\end{proof}

With the above a priori estimates in hand, by using the standard parabolic theory
we obtain the time-global existence of the classical solution $u(x,t)$.
Its uniqueness is proved in the standard way by using the maximum principle.




\section{Symmetric Solutions}

In this section we consider symmetric solutions. More precisely, we consider the case where $u_0(x)\in C^1([-1,1])$
satisfies the following conditions:
\begin{equation}\label{ini-1}
  u_0(x)=u_0(-x),\quad 1\leq u_0(x)\leq M_0, \quad 0< u'_0(x)<\varphi'_0 (x) \mbox{\ for } x\in (0,1),
\quad u'_0 (1)= u_0(1),
\end{equation}
for some $M_0>0$.  In this case we easily have
$$
u(x,t)= u(-x,t),\quad u_x(x,t)>0 \quad \mbox{ for } x\in (0,1],\ t>0.
$$
To study the convergence of $u$ (actually, the convergence of $u(x,t+s)-u(0,s)$ as $s\to \infty$),
we need further estimate for $u_x$. We will do this by the so-called zero number argument.

\subsection{Finer lower gradient estimate}
In this part, we show that \lq\lq the gradient of $u$ is not too small" for $x\in (0,1)$.

Fix an $h_0>0$. For any $h\geq M_0+h_0$ we see that
$$
\underline{u} (x,t;h) := \varphi(x;h_0) +c(h_0) t +h
$$
is a lower solution of (\ref{B}). Denote the union of the graphes of $\underline{u}(x,t;h)$ as
$$
\mathcal{D}(t) := \left\{(x,y)\mid y\geq \underline{u} (x,t; M_0+h_0),\ x\in[-1,1] \right\}.
$$
Then $\mathcal{D}(t)$ is the upper half of $\Omega$ with bottom $\left\{\left(x, \underline{u} (x,t; M_0 +h_0)\right) \mid x\in[-1,1]\right\}$,
which moves upward with speed $c(h_0)$.

We now construct another lower solution below the real solution $u$ such that it moves faster than $\mathcal{D}(t)$, and so
pushes $u$ entering the domain $\mathcal{D}(t)$ for large $t$. Then, by considering the derivatives of
$u(x,t)$ and $\underline{u} (x,t;h)$ at their contact points, we can obtain the desired gradient estimate.
To construct another lower solution below $u$, we notice by (\ref{3.1}) that $u(x,t)$ moves up to infinity.
Hence for any given $h^0>h_0$, there exists $t^0$ large such that
$$
u(x,t^0)>\varphi(x; h^0)+h^0\geq h^0,\qquad x\in[-1,1].
$$
Thus,
$$
\underline{u}^* (x,t):=\varphi(x;h^0)+h^0+c(h^0)t
$$
is also a lower solution to (\ref{B}). By the comparison principle we have
\begin{equation}\label{4.1}
\underline{u}^* (x,t)\leq u(x,t+t^0), \qquad x\in[-1,1],\ t\geq 0.
\end{equation}

Since $c(h^0)>c(h_0)$ we see that for all large $t$ (to say, $t\geq T^0$), $\underline{u}^* (x,t)$ rushes
into the domain $\mathcal{D}(t)$, so dose $u(x,t+t^0)$ due to (\ref{4.1}).
In other words, when $t\geq t^0+T^0$, the graph of $u(x,t)$ is immersed in the solid $\mathcal{D}(t)$.
Therefore, for any $t_1 \geq t^0+T^0$ and any $x_1\in (0,1)$, there exists a unique $h\geq M_0+h_0$
such that $u(x_1,t_1) = \underline{u} (x_1,t_1;h)$. We now show that
\begin{equation}\label{underlineu-x<u-x}
0< \underline{u}_x (x_1,t_1;h) \leq u_x(x_1, t_1).
\end{equation}
In this sense, we say that \lq\lq the gradient of $u$ is not too small" for $x\in (0,1)$.

We first consider the case where $u(x,t_1) \leq \underline{u} (x,t_1;h)$ or
$u(x,t_1) \geq \underline{u} (x,t_1;h)$ for $x\in (0,1)$. In such cases, it is clear that
$\underline{u}_x (x_1,t_1;h) =u_x(x_1, t_1)$.
Next, we consider the case
\begin{equation}\label{max>0>min}
\max_{x\in [0,1]} [u(x,t_1)-\underline{u} (x,t_1;h)] >0 > \min_{x\in [0,1]} [u(x,t_1)-\underline{u} (x,t_1;h)].
\end{equation}
By continuity, these inequalities remain valid for any $t$ near $t_1$. Hence such $t$ form an interval.
Denote the largest interval containing $t_1$ by $(s_1, s_2)$, then \eqref{max>0>min} holds for any $t\in (s_1,s_2)$
and $s_1 >0$ since $\underline{u} (x,0;h) \geq h >M_0 \geq u(x,0)$. At $s_1$ we have the following claim:

\medskip
\noindent
{\bf Claim 1}. $\underline{u}(x,s_1;h)\geq u(x,s_1)$ in $[0,1]$, and $\underline{u}(1,s_1;h)= u(1,s_1)$.

Totally, there are four cases between $\underline{u}(x,s_1;h)$ and $u(x,s_1)$:
\begin{itemize}
\item[(a)] $\underline{u}(x,s_1;h) > u(x,s_1) $ in $[0,1]$;
\item[(b)] $\underline{u}(x,s_1;h)\leq u(x,s_1)$ in $[0,1]$;
\item[(c)] \eqref{max>0>min} holds at $s_1$;
\item[(d)] $\underline{u}(x,s_1;h) \geq u(x,s_1)$ in $ [0,1]$,
{\it equality} holds at some points and {\it strict inequality} holds at other points.
\end{itemize}
We now show that the cases (a), (b) and (c) are impossible, and Claim 1 holds in case (d).
In fact, in case (a), by continuity we have $\underline{u}(x,t;h)> u(x,t)$ for $t$ satisfying $s_1 <t\ll s_1 +1$.
In case (b), by comparison we have $\underline{u}(x,t;h) < u(x,t)$ for $x\in [0,1]$
and $t>s_1$. In case (c), there is a small time interval containing $s_1$ as its interior point
such that \eqref{max>0>min} holds in this interval.  Thus, all of these three cases contradict the
definition of $(s_1, s_2)$.  The only left possible case is (d). If $\underline{u}(1,s_1;h) > u(1,s_1)$,
then using the strong comparison principle in $[0,1]\times [s_1, t)$
for $s_1 < t \ll s_1 +1$ we conclude that $\underline{u}(x,t;h) > u(x,t)$ in this domain.
This again contradicts the definition of $(s_1, s_2)$. Therefore, $\underline{u}(1,s_1;h) = u(1,s_1)$,
and Claim 1 is proved.

\medskip
\noindent
{\bf Claim 2}. For any $t\in (s_1, s_2)$, $\underline{u}(\cdot,t;h)$ contacts $u(\cdot,t)$ at exactly one point
$x= X(t,h)$ in $(0,1)$ with $\underline{u}_x (X, t;h) <  u_x (X, t)$.

Since, by Claim 1,
\begin{equation}\label{eta-slope0}
\underline{u}_x (1,s_1;h) =h_0 < \underline{u} (1,s_1;h) = u(1,s_1) = u_x (1,s_1),
\end{equation}
we have $\underline{u} (x,s_1;h) > u(x,s_1)$ in $(x_* ,1)$ for some $x_*$
near $1$. Using Claim 1 and the comparison principle in the domain $\{(x,t)\mid 0<x<x_*,\ t>s_1 \mbox{ with } t-s_1 \ll 1\}$
we conclude that $\underline{u} (x,t;h) > u(x,t)$ in this domain.
Therefore, Claim 2 holds for $s_1 < t\ll s_1 +1$, and the unique contact point between
$\underline{u} (\cdot,t;h)$ and $u(\cdot,t)$ is near $1$.
If we can show that $\underline{u} (1,t;h) < u(1,t)$ for all $t\in (s_1, s_2)$,
then combining with the Neumann condition at $x=0$:
$$
\underline{u}_x (0,t;h) = u_x( 0,t) =0,
$$
we can derive Claim 2 by the zero number argument (\cite{Ang}).
Assume by contradiction that for some $s_3\in (s_1, s_2)$, $\underline{u} (1,t;h) < u(1,t)$ for $t\in (s_1, s_3)$,
but $\underline{u} (1,s_3;h) = u(1,s_3)$.  Then Claim 2 holds in $(s_1, s_3)$.  Due to
$\underline{u} (1,s_3;h) = u(1,s_3)$ and the analogue of \eqref{eta-slope0} at $s_3$,
there exists $x^* \in (0, 1)$ such that $\underline{u} (x,s_3;h) > u(x,s_3)$ for $x\in [x^*, 1)$.
By continuity, there exists a small $\epsilon>0$ such that $\underline{u} (x^*, t;h) > u(x^*,t)$
for $t\in  (s_3 -\epsilon, s_3 +\epsilon)$.  Then, in the time period $(s_3 -\epsilon, s_3)$, the unique contact point
$X(t,h)$ between $\underline{u}(\cdot,t;h)$ and $u(\cdot,t)$  lies in $(x^*, 1)$ since
$\underline{u} (x^*,t;h) > u(x^*,t)$ and $\underline{u} (1,t;h) < u(1,t)$ for $t\in (s_3 -\epsilon, s_3)$.
Therefore, $\underline{u} (x,t;h) > u(x,t)$ for $(x,t)\in [0,x^*] \times (s_3-\epsilon, s_3)$.
Using the strong maximum principle we see that $\underline{u} (x,s_3;h) > u(x,s_3)$ for $x\in [0,x^*]$.
Consequently, $\underline{u} (x,s_3;h) > u(x,s_3)$ in $[0,1)$.
This, however, contradicts the definition of $(s_1, s_2)$ and our assumption $s_3 \in (s_1, s_2)$.
Therefore, Claim 2 holds for all $t\in (s_1, s_2)$.

Using Claim 2 in particular at $t_1\in (s_1, s_2)$, we have $\underline{u}_x (X(t_1,h),t_1;h) < u_x(X(t_1,h),t_1)$, where
$X(t_1, h)$ is the unique contact point between $\underline{u} (\cdot,t_1;h)$ and $u(\cdot,t_1)$,
which is nothing but $x_1$. This proves \eqref{underlineu-x<u-x}. Since $t_1 \geq t^0+T^0$ and $x_1\in (0,1)$
are arbitrarily given, we actually obtain the following lemma.
\begin{lemma}\label{lem:small-gradient}
For any $t\geq t^0+T^0$ and any $x\in (0,1)$, there holds
$$
u_x( - x, t) \leq \varphi'(-x;h_0)<0,\qquad 0< \varphi'(x;h_0) \leq u_x(x, t).
$$
\end{lemma}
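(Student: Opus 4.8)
The plan is to obtain the lemma directly from the estimate \eqref{underlineu-x<u-x} already established in this subsection, combined with the explicit form of the barrier $\underline{u}$ and the symmetry of $u$ imposed in \eqref{ini-1}.

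First I would fix $t_1\geq t^0+T^0$ and $x_1\in(0,1)$ and invoke \eqref{underlineu-x<u-x}: there is a (unique) $h\geq M_0+h_0$ with $u(x_1,t_1)=\underline{u}(x_1,t_1;h)$ and $0<\underline{u}_x(x_1,t_1;h)\leq u_x(x_1,t_1)$. Since $\underline{u}(x,t;h)=\varphi(x;h_0)+c(h_0)t+h$, its $x$-derivative is $\underline{u}_x(x,t;h)=\varphi'(x;h_0)=\tan\big(c(h_0)x\big)$, which is \emph{independent of both $t$ and the chosen $h$}; moreover $\tan(c(h_0)x)>0$ for $x\in(0,1)$ because $c(h_0)x\in(0,\arctan h_0)\subset(0,\pi/2)$. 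Substituting into \eqref{underlineu-x<u-x} yields $0<\varphi'(x_1;h_0)\leq u_x(x_1,t_1)$, which is the right-hand inequality of the lemma since $x_1\in(0,1)$ and $t_1\geq t^0+T^0$ were arbitrary.

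For the left-hand inequality I would use the symmetry $u(x,t)=u(-x,t)$ from \eqref{ini-1}, so that $u_x(-x,t)=-u_x(x,t)$ for all $x$ and $t>0$. Since $\varphi(\cdot;h_0)$ is even, $\varphi'(\cdot;h_0)$ is odd, hence $\varphi'(-x;h_0)=-\varphi'(x;h_0)<0$ for $x\in(0,1)$. Combining this with the right-hand inequality just proved gives $u_x(-x,t)=-u_x(x,t)\leq-\varphi'(x;h_0)=\varphi'(-x;h_0)<0$, which is exactly the remaining assertion.

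There is essentially no obstacle left at this stage, because the substantive work has already been carried out above: showing that the faster lower solution $\underline{u}^*$ drives the graph of $u$ into the moving solid $\mathcal{D}(t)$, and then running the zero-number argument (Claims 1 and 2) to pin the contact between $\underline{u}(\cdot,t;h)$ and $u(\cdot,t)$ to a single interior point with $\underline{u}_x<u_x$ there. If one were to redo the argument from scratch, the delicate point would be Claim 2 — controlling the behavior near $x=1$ via $\underline{u}_x(1,s_1;h)=h_0<\underline{u}(1,s_1;h)=u(1,s_1)=u_x(1,s_1)$ at the first contact time, so that the intersection number cannot jump — which is precisely where the zero-number diminishing property of \cite{Ang} enters.
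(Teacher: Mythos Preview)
Your proposal is correct and follows the paper's approach exactly: the paper establishes \eqref{underlineu-x<u-x} via Claims~1 and~2 and then immediately states the lemma, leaving implicit precisely the two observations you spell out --- that $\underline{u}_x(x,t;h)=\varphi'(x;h_0)$ is independent of $h$ and $t$, and that the inequality for $x\in(-1,0)$ follows from the symmetry of $u$ and the oddness of $\varphi'(\cdot;h_0)$.
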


The inequalities in this lemma in fact can be improved to strict ones by using the strong maximum principle
and the zero number argument, which is omitted here, since the present version is enough for our approach below.

\subsection{Finer upper gradient estimate}
We now show that \lq\lq the gradient of $u$ is not too large" for $x\in (0,1)$.

By our assumption $u'_0(x)<\varphi'_0(x)$ for $x\in (0,1)$, we see that $u_0(x) \leq \varphi_0(x) + u_0(0)$ in $(-1,1)$, and so
\begin{equation}\label{4.6}
u(x,t)<\varphi_0(x) + u_0(0)  +\frac{\pi}{2}t,\qquad x\in(-1,1),\ t>0
\end{equation}
by the comparison principle. On the other hand, for any $r < u_0(0)$,
$\varphi_0(x) + r - u_0(x)$ has exactly two non-degenerate zeros.
Since $\zeta(x,t):=\varphi_0(x)+\frac{\pi}{2}t + r -u(x,t)$ satisfies a linear
parabolic equation whose coefficients are bounded in any compact interval of $(-1,1)\times (0,\infty)$,
and since
$$
\zeta(-1+0,t) = \zeta(1-0, t) =+ \infty,\quad t>0,
$$
we can use the zero number argument to conclude that, for any $t>0$,  either
\begin{itemize}
\item[(1)] $\zeta (\cdot, t)$ has two non-degenerate zeros $\pm\rho(t)$ with $\rho(t)\in(0,1)$; or
\item[(2)] $\zeta  (\cdot, t)$ has a unique degenerate zero $0$; or
\item[(3)] $\zeta  (\cdot, t)$ is positive, and has no zeros in $(-1,1)$.
\end{itemize}

Note that, for any $t>0$,  the graph of $u(\cdot,t)$ is immersed in
$$
\mathcal{E}(t) := \left\{\left. \left( x, \varphi_0(x) + r +\frac{\pi}{2}t\right) \right| x\in (-1,1),\ r\leq u_0(0)\right\}.
$$
Hence, for any $t_2>0$ and any $x_2 \in (0,1)$, there exists a unique $r = R(t_2,x_2)<u_0(0)$ such that
$\zeta (\cdot, t_2)$ with $r=R(t_2,x_2)$ has exactly two zeros $x= \pm x_2$. Consequently,
\begin{equation}\label{not too large}
\varphi'_0(-x_2) < u_x(- x_2,t_2) < 0,\qquad 0< u_x(x_2,t_2) < \varphi'_0(x_2).
\end{equation}
In this sense, we say that \lq\lq the gradient of $u$ is not too large" for $x\in (0,1)$.

\subsection{Convergence of the solution}

Assume $u(x,t)$ is a symmetric solution starting from an initial data satisfying \eqref{ini-1}.
Let $\{t_n\}$ be a time sequence with $t_n\rightarrow \infty\ (n\rightarrow\infty)$. Set
$$
u_n(x,t):=u(x,t+t_n)-u(0,t_n),\qquad x\in[-1,1],\ -t_n<t<\infty.
$$
For any given small $\varepsilon>0$, by the results in the previous two subsections we have
\begin{equation}\label{5.1}
\varphi' (x;h_0)<u_{nx}(x,t)<\varphi'_0(x),\qquad x\in (0,1-\varepsilon],\ n\gg 1.
\end{equation}
$h_0>0$ can be as large as possible, provided $n$ is sufficiently large and $t$ is bounded. Combining with Lemma \ref{bound-est} we have
$$
\left\|u_n(x,t) \right\|_{C^{1,0}\left([\varepsilon-1,1-\varepsilon]\times [-T,T]\right)}\leq C_1(\varepsilon,T),
$$
for any $T>0$. By the $L^p$ estimates, Sobolev embedding theorem and the Schauder estimate we have,
for any $\alpha\in(0,1)$,
$$
\left\|u_n(x,t) \right\|_{C^{2+\alpha,1+\frac{\alpha}{2}}\left([\varepsilon-1,1-\varepsilon]\times [-T,T]\right)}\leq C_2(\varepsilon,T).
$$
Therefore, for any $\beta\in (0,\alpha)$, there exist  a subsequence $\{u_{n_i}\}$ of $\{u_n\}$ and a function
$\mathcal{U}_{T,\varepsilon}\in C^{2+\alpha,1+\frac{\alpha}{2}} $ $\left([\varepsilon-1,1-\varepsilon]\times [-T,T]\right)$
such that
$$
\left\|u_{n_i}-\mathcal{U}_{T,\varepsilon}\right\|_{C^{2+\beta,1+\frac{\beta}{2}}
\left([\varepsilon-1,1-\varepsilon]\times [-T,T]\right)}\rightarrow 0\quad  (i\rightarrow \infty).
$$
Using the Cantor's diagonal argument, we see that there exist a function $\mathcal{U}\in C^{2+\alpha,1+\frac{\alpha}{2}}_{loc}
\left((-1,1)\times \R\right)$ and a subsequence of $\{u_n\}$ (denoted it again by $\{u_{n_i}\}$) such that
 $$
u_{n_i}\rightarrow \mathcal{U}\ \ (i\to \infty), \quad\text{in}\quad C^{2+\beta,1+\frac{\beta}{2}}_{loc}\left((-1,1)\times \R\right) \quad \text{topology}.
 $$
Moreover, $\mathcal{U}(x,t)$ is an entire solution of the equation in (\ref{B}) with $\mathcal{U}(0,0)=0$ and, by (\ref{5.1}),
$$
\varphi' (x;h_0)\leq \mathcal{U}_x(x,t)\leq \varphi'_0(x),\qquad  x\in [0,1),\ t\in \R.
$$
Since $h_0>0$ can be as large as possible and since
$$
\varphi' (x;h_0)\rightarrow \varphi'_0(x) \mbox{ for } x\in (-1,1),\qquad\quad \text{as}\quad h_0\rightarrow \infty,
$$
we conclude that
$$
\mathcal{U}_x(x,t) =  \varphi'_0(x),\qquad x\in (-1,1),\ t\in \R,
$$
and so
$$
\mathcal{U}(x,t)= \varphi_0(x)+C(t),\qquad x\in (-1,1),\ t\in \R.
$$
Finally, since $\mathcal{U}$ is an entire solution of the equation in (\ref{B}), we see that
$C'(t)=\frac{\pi}{2} $ and thus
$$
\mathcal{U}(x,t)=\varphi_0(x)+\frac{\pi}{2}t,\qquad x\in(-1,1),\ t\in \R.
$$

The above result implies that $\{u_{n_i}\}$ converges to the special {\it grim reaper} $\varphi_0(x)+\frac{\pi}{2}t$.
Since this {\it grim reaper} is unique and the time sequence $\{t_n\}$ is arbitrarily given, we actually obtain the following result.

\begin{thm}\label{thm:converge-symmetric}
Assume $u(x,t)$ is the time-global solution of \eqref{B} with initial data $u_0(x)$ satisfying \eqref{ini-1}. Then,
for any $\alpha\in (0,1)$,
\begin{equation}\label{5.2}
u(x,t+s)-u(0,t)\rightarrow \varphi_0(x)+\frac{\pi}{2}t, \qquad \text{as} \ s\rightarrow \infty,
\end{equation}
in the $C^{2+\alpha,1+\frac{\alpha}{2}}_{loc}\left((-1,1)\times \R \right)$ topology.
\end{thm}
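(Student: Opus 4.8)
The plan is to argue by compactness along an arbitrary time sequence and then to identify the limit uniquely. Fix a sequence $t_n\to\infty$ and set $u_n(x,t):=u(x,t+t_n)-u(0,t_n)$ on $[-1,1]\times(-t_n,\infty)$. The two-sided interior gradient control already established is uniform in $t$: for $x\in(0,1)$ one has $0<\varphi'(x;h_0)\le u_x(x,t)$ from Lemma~\ref{lem:small-gradient} and $u_x(x,t)<\varphi_0'(x)$ from \eqref{not too large}, and the mirror bounds on $(-1,0)$ follow from the symmetry $u(x,t)=u(-x,t)$. Hence on any box $[\varepsilon-1,1-\varepsilon]\times[-T,T]$ the derivatives $u_{nx}$ are bounded independently of $n$ once $n$ is large; combined with the (normalised) height bound of Lemma~\ref{bound-est} this gives a uniform $C^{1,0}$ bound for $u_n$ on compact subsets of $(-1,1)\times\R$.

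Next I would run $u_n$ through the standard interior parabolic machinery. On $[\varepsilon-1,1-\varepsilon]\times[-T,T]$ the equation in \eqref{B} is uniformly parabolic, since $1/(1+u_{nx}^2)$ is bounded above and below there, so $L^p$ estimates followed by the Sobolev embedding and the interior Schauder estimates upgrade the $C^{1,0}$ bound to a $C^{2+\alpha,1+\alpha/2}$ bound on slightly smaller boxes, with constants depending only on $\varepsilon$ and $T$. A Cantor diagonal extraction over an exhaustion of $(-1,1)\times\R$ by such boxes then produces a subsequence $u_{n_i}\to\mathcal{U}$ in $C^{2+\beta,1+\beta/2}_{loc}((-1,1)\times\R)$ for every $\beta<\alpha$, where $\mathcal{U}$ is an entire solution of the equation in \eqref{B} with $\mathcal{U}(0,0)=0$; passing the gradient bounds to the limit gives $\varphi'(x;h_0)\le\mathcal{U}_x(x,t)\le\varphi_0'(x)$ on $[0,1)\times\R$.

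The step that pins the limit down is the freedom in $h_0$. For each fixed large $h_0$ the lower-solution construction $\underline{u}(x,t;h):=\varphi(x;h_0)+c(h_0)t+h$ with $h\ge M_0+h_0$ forces the bound above once $n$ is large enough, so letting first $n\to\infty$ and then $h_0\to\infty$, and using $\varphi'(x;h_0)\to\varphi_0'(x)$ locally uniformly on $(-1,1)$, the squeeze yields $\mathcal{U}_x(x,t)\equiv\varphi_0'(x)$; hence $\mathcal{U}(x,t)=\varphi_0(x)+C(t)$. Substituting into the equation and using $\varphi_0''/(1+(\varphi_0')^2)\equiv\tfrac{\pi}{2}$ gives $C'(t)\equiv\tfrac{\pi}{2}$, and $\mathcal{U}(0,0)=0$ fixes $C(t)=\tfrac{\pi}{2}t$, so $\mathcal{U}=\varphi_0+\tfrac{\pi}{2}t$. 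Since this limit is independent both of the sequence $\{t_n\}$ and of the extracted subsequence, the full convergence \eqref{5.2} follows by the standard subsequence argument.

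The substantive work underlying this scheme is not the compactness and identification above but the uniform-in-$t$ interior gradient estimates it invokes, namely Lemma~\ref{lem:small-gradient} and \eqref{not too large}, which rest on zero-number arguments for the intersections of $u(\cdot,t)$ with the translated grim reapers $\varphi(x;h_0)+c(h_0)t+h$ and with $\varphi_0(x)+r+\tfrac{\pi}{2}t$; controlling those intersection numbers over long time intervals, despite the boundary slopes blowing up, is the delicate point. Within the present proof the only place needing care is the order of limits — first $n\to\infty$ for each fixed $h_0$, then $h_0\to\infty$ — since for a given $h_0$ the bound of Lemma~\ref{lem:small-gradient} only takes effect for $t_n$ beyond a threshold depending on $h_0$; everything else is routine parabolic regularity.
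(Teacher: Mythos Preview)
Your proof is correct and follows essentially the same route as the paper's own argument in Section~4.3: normalise along an arbitrary sequence $t_n\to\infty$, use Lemma~\ref{lem:small-gradient} and \eqref{not too large} for uniform interior $C^{1,0}$ bounds, upgrade via $L^p$--Sobolev--Schauder and Cantor's diagonal to get a $C^{2+\alpha,1+\alpha/2}_{loc}$ sublimit $\mathcal{U}$, then squeeze $\mathcal{U}_x$ between $\varphi'(x;h_0)$ and $\varphi_0'(x)$ and let $h_0\to\infty$ to identify $\mathcal{U}$ with the grim reaper. Your explicit remark about the order of limits (first $n\to\infty$ at fixed $h_0$, then $h_0\to\infty$) is exactly the point the paper handles by the clause ``$h_0>0$ can be as large as possible, provided $n$ is sufficiently large''.
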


\section{General Solutions}
The conclusion in the previous section holds only for symmetric solutions.
We consider general solutions in this section, that is, we assume $u_0$
satisfies \eqref{1.4} in this section.

\subsection{Interior estimates}
We choose a smooth, even function $\psi (x)$ with
$$
0< \psi'(x)<\varphi'_0(x),\quad 0<\psi(x) <u_0(x),\quad \psi''(x)>0 \quad \mbox{for } x\in (0,1],\quad \psi'(1)=\psi(1).
$$
Then $\psi$ satisfies \eqref{ini-1} except for $\psi(x)\geq 1$. One example is $\psi(x)= \delta \Big[ \frac{\sqrt{2}}{2}
- \frac{4}{\pi +4} \cos \frac{\pi x}{4} \Big]$ for small $\delta>0$. Denote the solution of (\ref{B})
with $u(x,0)=\psi(x)$ by $u(x,t;\psi)$. Then it is symmetric, $u_{xx}(x,t;\psi) >0$ due to $u_t(x,t;\psi)>0$,
and it satisfies all the conclusions in the previous section. Furthermore, it moves upward monotonically to infinity,
so we have
$$
u_0(x) < u(x,T;\psi),\qquad x\in[-1,1],
$$
for some positive $T$. Thus, by the comparison principle we have
\begin{equation}\label{4.9}
u(x,t;\psi) < u(x,t; u_0)< u(x,t+T;\psi) ,\qquad x\in [-1,1],\ t>0.
\end{equation}
This formula gives the $L^\infty$ estimate for $u(x,t; u_0)$.

In what follows, we want to present a uniform interior gradient estimate.
First we prove a lemma.

\begin{lemma}\label{lem4.3}
For any small $\varepsilon\in(0, \frac12)$ and any $t>0$, there hold
\begin{equation}\label{4.10}
\min_{1-2\varepsilon\leq x\leq 1-\varepsilon}\left|u_x(x,t;u_0)\right|<M_1 :=\varepsilon^{-1} \Big[\varphi_0(1-\varepsilon)+\frac{\pi}{2}T \Big],\qquad
\min_{\varepsilon-1 \leq x\leq 2\varepsilon-1}\left|u_x(x,t;u_0)\right|<M_1.
\end{equation}
\end{lemma}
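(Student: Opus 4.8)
The idea is to transfer the interior gradient estimate from the symmetric reference solution $u(x,t;\psi)$ (completely understood in Section 4) to the general solution $u(x,t;u_0)$ via the squeezing \eqref{4.9}, using only the mean value theorem, the comparison principle, and one elementary remark about $u(\cdot;\psi)$ at $x=0$. Fix $\varepsilon\in(0,\tfrac12)$ and $t>0$. By the mean value theorem there is $\xi\in(1-2\varepsilon,1-\varepsilon)$ with $u_x(\xi,t;u_0)=\varepsilon^{-1}\big(u(1-\varepsilon,t;u_0)-u(1-2\varepsilon,t;u_0)\big)$, hence
\[
\min_{1-2\varepsilon\le x\le1-\varepsilon}|u_x(x,t;u_0)|\le|u_x(\xi,t;u_0)|=\varepsilon^{-1}\big|u(1-\varepsilon,t;u_0)-u(1-2\varepsilon,t;u_0)\big|,
\]
so it suffices to prove $\big|u(1-\varepsilon,t;u_0)-u(1-2\varepsilon,t;u_0)\big|<\varphi_0(1-\varepsilon)+\tfrac\pi2 T$ for every $t>0$; the second estimate in \eqref{4.10} then follows by the same argument on $[\varepsilon-1,2\varepsilon-1]$.

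By \eqref{4.9},
\[
u(1-\varepsilon,t;\psi)-u(1-2\varepsilon,t+T;\psi)\ \le\ u(1-\varepsilon,t;u_0)-u(1-2\varepsilon,t;u_0)\ \le\ u(1-\varepsilon,t+T;\psi)-u(1-2\varepsilon,t;\psi).
\]
Since $u(\cdot,\tau;\psi)$ is a symmetric solution, Section 4 gives that it is even, strictly convex ($u_{xx}(\cdot,\tau;\psi)>0$), and satisfies $0<u_x(x,\tau;\psi)<\varphi_0'(x)$ for $x\in(0,1)$, $\tau>0$ (the analogue of \eqref{not too large}). Put $I(t):=u(1-2\varepsilon,t+T;\psi)-u(1-2\varepsilon,t;\psi)$. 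Then the rightmost member equals $\big[u(1-\varepsilon,t+T;\psi)-u(1-2\varepsilon,t+T;\psi)\big]+I(t)<\int_{1-2\varepsilon}^{1-\varepsilon}\varphi_0'(x)\,dx+I(t)=\varphi_0(1-\varepsilon)-\varphi_0(1-2\varepsilon)+I(t)$, while the leftmost member equals $\int_{1-2\varepsilon}^{1-\varepsilon}u_x(x,t;\psi)\,dx-I(t)\ge-I(t)$. Since $\varphi_0(1-2\varepsilon)>0$, the whole statement reduces to the single inequality $I(t)\le\varphi_0(1-2\varepsilon)+\tfrac\pi2 T$ for all $t\ge0$.

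Now the key remark: because $u_x(x,\tau;\psi)<\varphi_0'(x)$ on $(0,1)$ with $u_x(0,\tau;\psi)=\varphi_0'(0)=0$, dividing by $x$ and letting $x\to0^{+}$ gives $u_{xx}(0,\tau;\psi)\le\varphi_0''(0)=\tfrac\pi2$, hence $u_t(0,\tau;\psi)=u_{xx}(0,\tau;\psi)\le\tfrac\pi2$ for $\tau>0$. Therefore $\theta(\tau):=u(0,\tau;\psi)-\tfrac\pi2\tau$ is non-increasing on $[0,\infty)$, so $\theta(\tau)\le\theta(0)=\psi(0)$ and $\theta(t+T)\le\theta(t)$ for $t\ge0$. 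Moreover $u_x(\cdot,\tau;\psi)<\varphi_0'$ on $(0,1)$ makes $u(x,\tau;\psi)-\varphi_0(x)$ strictly decreasing there, whence $u(x,\tau;\psi)<\varphi_0(x)+u(0,\tau;\psi)=\varphi_0(x)+\tfrac\pi2\tau+\theta(\tau)$, while convexity and evenness give $u(1-2\varepsilon,\tau;\psi)\ge u(0,\tau;\psi)=\tfrac\pi2\tau+\theta(\tau)$. Using the first at $\tau=t+T$ and the second at $\tau=t$,
\[
I(t)\ <\ \varphi_0(1-2\varepsilon)+\tfrac\pi2 T+\big(\theta(t+T)-\theta(t)\big)\ \le\ \varphi_0(1-2\varepsilon)+\tfrac\pi2 T,
\]
which is exactly what was needed. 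Assembling the three steps proves \eqref{4.10}.

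The only step that requires an idea is the bound on $I(t)$: the absence of a uniform‑in‑$t$ gradient estimate and the fact that the boundary condition $u_x(\pm1,t)=\pm u(\pm1,t)$ is not invariant under vertical translation mean one cannot bound the time‑$T$ displacement $u(\cdot,t+T;\psi)-u(\cdot,t;\psi)$ by $\tfrac\pi2 T$ by a naive comparison. What rescues the argument is that the displacement need only be controlled at the \emph{interior} point $x=1-2\varepsilon$, where (by convexity) it is dominated by the displacement at the vertex $x=0$; and there the finer gradient bound of Section 4 forces $u_t(0,\tau;\psi)\le\tfrac\pi2$, so the reference profile cannot outrun the grim reaper at its vertex. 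Everything else is the mean value theorem and the comparison principle.
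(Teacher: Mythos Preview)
Your proof is correct and follows the same skeleton as the paper's: reduce via the mean value theorem to bounding $|u(1-\varepsilon,t;u_0)-u(1-2\varepsilon,t;u_0)|$, then use the sandwich \eqref{4.9} together with the convexity, evenness and the finer upper gradient estimate of the symmetric reference solution $u(\cdot,\cdot;\psi)$. The only substantive difference is how you control the time displacement. The paper bounds $u(1-\varepsilon,t+T;\psi)-u(0,t;\psi)$ in one stroke: from $u(x,t;\psi)<\varphi_0(x)+u(0,t;\psi)$ it applies the comparison principle directly against the grim reaper $\varphi_0(x)+\tfrac{\pi}{2}s+u(0,t;\psi)$ on $(-1,1)\times[0,T]$ (the boundary condition being trivially satisfied since $\varphi_0(\pm1)=+\infty$), obtaining $u(x,t+T;\psi)<\varphi_0(x)+\tfrac{\pi}{2}T+u(0,t;\psi)$. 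You instead split off $I(t)=u(1-2\varepsilon,t+T;\psi)-u(1-2\varepsilon,t;\psi)$ and bound it via the vertex inequality $u_t(0,\tau;\psi)=u_{xx}(0,\tau;\psi)\le\varphi_0''(0)=\tfrac{\pi}{2}$. Both routes give the same inequality $u(0,t+T;\psi)\le u(0,t;\psi)+\tfrac{\pi}{2}T$; the paper's PDE comparison is slightly more direct, while your second-derivative argument avoids comparing with an unbounded supersolution. As a bonus, your treatment of the two-sided bound on the height difference is more explicit than the paper's, which tacitly handles only the case $u_x\ge M_1$ in the contradiction.
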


\begin{proof}
We only prove the first inequality since the second one is proved similarly.
Assume by contradiction that, for some $t=t_0>0$,
$$
\left|u_x(x,t_0;u_0)\right| \geq M_1, \qquad x\in[1-2\varepsilon, 1-\varepsilon].
$$
Integrating it over $[1-2\varepsilon, 1-\varepsilon]$ we obtain
\begin{equation}\label{4.11}
u\left(1-\varepsilon, t_0; u_0\right)-u\left(1-2\varepsilon, t_0; u_0\right)
 \geq \varphi_0\left(1-\varepsilon\right)+\frac{\pi}{2}T.
\end{equation}

On the other hand, by (\ref{4.9}) we have
\begin{equation}\label{4.12}
 \begin{array}{lll}
 u \left(1-\varepsilon, t_0; u_0\right)-u\left(1-2\varepsilon, t_0; u_0\right)
 & < & u\left(1-\varepsilon, t_0+T; \psi\right)-u\left(1-2\varepsilon, t_0; \psi\right) \\
 & \leq & u\left(1-\varepsilon, t_0+T; \psi\right)-u\left(0, t_0; \psi\right)
 \end{array}
\end{equation}
since $u(\cdot, t; \psi)$ is convex and symmetric.
By \eqref{not too large}, $u(x,t_0;\psi)<\varphi_0(x)+u(0,t_0;\psi)$, and so by comparison we have
$$
u(x,t_0 +T;\psi)<\varphi_0(x)+\frac{\pi}{2}T+u(0,t_0;\psi).
$$
In particular, at $x=1-\varepsilon$ we have
$$
u\left(1-\varepsilon, t_0+T; \psi\right)-u\left(0, t_0; \psi\right)< \varphi_0(1-\varepsilon)+\frac{\pi}{2}T.
$$
This contradicts (\ref{4.11}) and (\ref{4.12}). This proves the lemma.
\end{proof}

Using this lemma we can prove the following interior gradient estimates.

\begin{lemma}\label{lem:interior-g-est}
For any small $\varepsilon>0$, there exists $T_\varepsilon >0$ such that
\begin{equation}\label{interior-g-est}
 |u_x (x,t; u_0)| \leq M_2, \quad   -1+2\varepsilon <x <1-2\varepsilon,\ t>T_\varepsilon,
 \end{equation}
where $M_2 := \max\{M_1, \|u_x(\cdot, T_\varepsilon; u_0)\|_{L^\infty}\}$ and $M_1$ is that in \eqref{4.10}.
\end{lemma}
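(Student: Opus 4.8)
The plan is to combine the weak gradient bound from Lemma \ref{lem4.3} — which only gives a point where $|u_x|$ is small in each of the two strips $[1-2\varepsilon, 1-\varepsilon]$ and $[\varepsilon-1, 2\varepsilon-1]$ — with the maximum principle applied to $w := u_x$ on an interior parabolic cylinder. The key observation is that $w = u_x$ satisfies a linear parabolic equation with no zeroth-order term (differentiate $u_t = u_{xx}/(1+u_x^2)$ in $x$), so $w$ attains its maximum and minimum on the parabolic boundary of any space-time rectangle. Thus on the rectangle $Q := [-1+\varepsilon, 1-\varepsilon] \times [T_\varepsilon, \infty)$, the value $|w(x,t)|$ for $(x,t)$ in the interior $(-1+2\varepsilon, 1-2\varepsilon) \times (T_\varepsilon, \infty)$ is controlled by the boundary data: the initial slice $t = T_\varepsilon$ (contributing $\|u_x(\cdot, T_\varepsilon; u_0)\|_{L^\infty}$) and the two lateral sides $x = \pm(1-\varepsilon)$.

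The remaining issue is therefore to bound $|u_x(\pm(1-\varepsilon), t; u_0)|$ for all large $t$. This is where Lemma \ref{lem4.3} enters. Fix $t > T_\varepsilon$. By Lemma \ref{lem4.3} there is a point $x_*(t) \in [1-2\varepsilon, 1-\varepsilon]$ with $|u_x(x_*(t), t; u_0)| < M_1$. I would like to conclude that $|u_x(1-\varepsilon, t; u_0)| \le M_1$ as well; this follows from the maximum principle applied to $w = u_x$ on the sub-rectangle $[x_*(t), 1-\varepsilon] \times (\text{earlier time}, t]$ — but the left endpoint $x_*(t)$ moves with $t$, so one cannot directly use it as a fixed lateral boundary. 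The clean way around this: since $u(\cdot, t; u_0)$ is squeezed between two symmetric, convex translates of $u(\cdot, t; \psi)$ by \eqref{4.9}, and these have positive increasing slope on $(0,1)$, one expects $u_x(\cdot, t; u_0) > 0$ on a right-neighborhood of $x = 0$ and in particular $u_x$ does not oscillate wildly on $(1-2\varepsilon, 1-\varepsilon)$; combined with the sign information, the point where $|u_x| < M_1$ together with monotonicity/convexity-type control forces $|u_x|$ to stay below $M_1$ throughout $[1-2\varepsilon, 1-\varepsilon]$ — or, more robustly, one argues by the zero-number / comparison technology already developed: the graph of $u(\cdot, t; u_0)$ lies below $\varphi_0(x) + \tfrac{\pi}{2}T + u(0, t_0; \psi)$ as in the proof of Lemma \ref{lem4.3}, and above the corresponding lower translate, which pins the lateral slope.

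Concretely, I would proceed as follows. \emph{Step 1:} Note $w = u_x$ solves a linear homogeneous parabolic PDE, so the maximum principle applies on interior cylinders. \emph{Step 2:} Choose $T_\varepsilon$ large enough that the squeezing \eqref{4.9} together with the estimate \eqref{not too large} for $u(\cdot, t; \psi)$ is in force; set $M_2 := \max\{M_1, \|u_x(\cdot, T_\varepsilon; u_0)\|_{L^\infty}\}$. \emph{Step 3:} Show $|u_x(\pm(1-\varepsilon), t; u_0)| \le M_1 \le M_2$ for all $t \ge T_\varepsilon$, using Lemma \ref{lem4.3} to locate a small-slope point in each strip and then using the maximum principle for $w$ on the strip $[1-2\varepsilon, 1-\varepsilon] \times [\tau, t]$ (respectively its mirror) together with the barrier $\varphi_0(x) + \tfrac{\pi}{2}T + u(0,\tau;\psi)$ from \eqref{4.9}--\eqref{not too large} to prevent $|u_x|$ from exceeding $M_1$ between the small-slope point and $1-\varepsilon$. \emph{Step 4:} Apply the maximum principle for $w$ on $[-1+\varepsilon, 1-\varepsilon] \times [T_\varepsilon, \infty)$: on the parabolic boundary, $|w| \le M_2$ by Steps 2 and 3, hence $|w| \le M_2$ on the interior, giving \eqref{interior-g-est}.

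The main obstacle is Step 3 — converting the single small-slope point furnished by Lemma \ref{lem4.3} into a genuine pointwise bound at the fixed interior line $x = 1-\varepsilon$, since the small-slope point depends on $t$ and cannot serve as a lateral boundary for the maximum principle; the resolution is to fall back on the convexity and the explicit upper/lower $\varphi_0$-barriers already established, which control the slope on the whole strip uniformly in $t$.
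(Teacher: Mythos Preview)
Your Step 3 has a genuine gap, and the resolution you propose does not work. You correctly identify the obstacle: Lemma \ref{lem4.3} only furnishes, for each $t$, \emph{some} point $x_*(t)\in[1-2\varepsilon,1-\varepsilon]$ with $|u_x(x_*(t),t)|<M_1$, and this point moves with $t$, so it cannot serve as a fixed lateral boundary. But your proposed fix --- ``convexity and the explicit upper/lower $\varphi_0$-barriers'' --- does not go through. The general solution $u(\cdot,t;u_0)$ is \emph{not} known to be convex (only the auxiliary symmetric solution $u(\cdot,t;\psi)$ is), and the $L^\infty$ squeezing \eqref{4.9} between two copies of $u(\cdot,\cdot;\psi)$ gives no pointwise control of $u_x(1-\varepsilon,t;u_0)$: an upper and a lower barrier on $u$ constrain the slope only at points of contact, which need not occur at $x=1-\varepsilon$. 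So you have not established that $|u_x(\pm(1-\varepsilon),t)|\le M_1$, and without that your Step 4 maximum-principle argument on the fixed rectangle collapses.

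The paper's proof sidesteps this difficulty by \emph{not} working on a fixed rectangle at all. Instead, set $\zeta:=u_x-M_1$; it satisfies a scalar parabolic equation, and for $t$ large the boundary conditions force $\zeta(1,t)=u(1,t)-M_1>0$ and $\zeta(-1,t)=-u(-1,t)-M_1<0$. The zero-number argument then guarantees that, after some time $T_\varepsilon$, all zeros of $\zeta(\cdot,t)$ are non-degenerate; in particular the largest zero $\rho_+(t)$ is a \emph{continuous curve}, and Lemma \ref{lem4.3} (which produces a point in $[1-2\varepsilon,1-\varepsilon]$ where $\zeta<0$) forces $\rho_+(t)>1-2\varepsilon$. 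Doing the same with $u_x+M_1$ yields a continuous curve $\rho_-(t)<-1+2\varepsilon$ on which $u_x=-M_1$. Now apply the maximum principle for $u_x$ on the \emph{time-dependent} domain $\{\rho_-(t)<x<\rho_+(t),\ t>T_\varepsilon\}$: on the lateral boundaries $|u_x|=M_1$ by construction, and on the bottom $|u_x|\le\|u_x(\cdot,T_\varepsilon)\|_{L^\infty}$. Since $(-1+2\varepsilon,1-2\varepsilon)$ is contained in $(\rho_-(t),\rho_+(t))$ for every $t>T_\varepsilon$, the estimate \eqref{interior-g-est} follows. The missing idea in your attempt is precisely this use of the zero-number property to manufacture continuous moving lateral boundaries on which $u_x$ takes the prescribed value $\pm M_1$.
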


\begin{proof}
Since $u(x,t;u_0) \to \infty$ as $t\to \infty$, there exists $ T' >0$ large such that $u(\pm 1, t; u_0)
> M_1$ for all $t>T' $. Denote $\zeta(x,t):= u_x(x,t; u_0)-M_1$, then $\zeta$ solves
$$
\zeta_t = \frac{\zeta_{xx}}{1+u^2_x} - \frac{2 u_x }{(1+u^2_x)} \zeta^2_x,\quad -1<x<1,\ t>0,
$$
and $\zeta (1,t)>0>\zeta(-1,t)$ for $t>T'$. Using the zero number properties (cf. \cite{Ang})
we conclude that, for some $T_\varepsilon  >T' $, the function $\zeta(\cdot, t)$ has only non-degenerate zeros for
$t\geq T_\varepsilon $. Denote the largest zero of $\zeta(\cdot,t)$ in $(-1,1)$ by $ \rho_+(t)$. Due to
the non-degeneracy of $\rho_+(t)$ we see that $x= \rho_+(t)$ is a continuous curve. Moreover,
\eqref{4.10} in the previous lemma indicates that $\rho_+(t) > 1- 2\varepsilon$.
In a similar way we can find another continuous curve $x=\rho_-(t)$ for $t>T_\varepsilon $ ($T_\varepsilon $ can be chosen larger if necessary),
with $\rho_-(t) \in (-1, -1 + 2\varepsilon)$, such that $u_x(\rho_-(t),t) = - M_1$ for $t >T_\varepsilon$.
Then, using the maximum principle for $u_x$ in the domain $D(T_\varepsilon ) := \{(x,t) \mid \rho_-(t)<x<\rho_+(t),\ t>T_\varepsilon \}$
we conclude that $|u_x (x,t;u_0)|\leq M_2$ in $D(T_\varepsilon)$. The estimate \eqref{interior-g-est} then follows
from the fact $\rho_-(t)<-1+2\varepsilon < 1-2\varepsilon < \rho_+(t)$ for $t>T_\varepsilon$.
\end{proof}

\subsection{Convergence of general solutions}

Let $\{t_n\}$ be any time sequence with $t_n\to \infty$, we consider the solution sequence $\{ u(x,t+t_n; u_0)-u(0,t_n;\psi)\}$.

For any given small $\varepsilon>0$ and any $\tau >0$, let $T_\varepsilon$ be as that in Lemma \ref{lem:interior-g-est},
then \eqref{4.9} and \eqref{interior-g-est} imply that, for all large $n$, $u(x,t+t_n; u_0)-u(0,t_n;\psi)$ is bounded
in $C^{1,0}([2\varepsilon-1, 1-2\varepsilon]\times [-\tau, \tau])$ norm, and the bounds are independent of $n$.
Using the standard parabolic theory we can even have the $C^{2+\alpha,1+\frac{\alpha}{2}}
([2\varepsilon-1, 1-2\varepsilon] \times [-\tau, \tau])$ (for any $\alpha\in (0,1)$) bounds for the solution sequence, also uniform in $n$, and
so we can find a convergent subsequence.
Taking $\varepsilon \to 0$ and $\tau \to \infty$, and using the Cantor's diagonal argument we conclude that,
there is a subsequence of $\{t_n\}$, denoted is again by $\{t_n\}$, such that
\begin{equation}\label{u-n-to-W}
u(x,t+t_n; u_0)-u(0,t_n;\psi)\rightarrow \mathcal{W}(x,t) \qquad \text{in}\quad
C^{2+\alpha,1+\frac{\alpha}{2}}_{loc}\left((-1,1)\times \R \right)\quad \text{topology},
\end{equation}
for some entire solution $\mathcal{W}$ of the equation in (\ref{B}).
On the other hand, as a consequence of Theorem \ref{thm:converge-symmetric} we have, as $n\to \infty$,
$$
u(x,t+t_n;\psi)-u(0, t_n;\psi)\rightarrow \varphi_0(x)+\frac{\pi}{2}t, \qquad u(x,t+T+t_n;\psi)
-u(0,t_n;\psi)\rightarrow \varphi_0(x)+\frac{\pi}{2}(t+T).
$$
Hence, we conclude from (\ref{4.9}) that
\begin{equation}\label{5.6}
\varphi_0(x)+\frac{\pi}{2}t\leq\mathcal{W}(x,t)\leq \varphi_0(x)+\frac{\pi}{2}t+\frac{\pi}{2}T,\qquad x\in(-1,1),\ t\in \R.
\end{equation}

Denote
$$
\theta(x,t):= \arctan u_x(x,t),\quad x\in [-1,1],\ t>0.
$$
Then $\theta$ satisfies
$$
\left\{
 \begin{array}{ll}
 \theta_t = \cos^2 \theta \cdot \theta_{xx}, & -1<x<1,\ t>0,\\
 \theta(\pm 1,t)=  \pm \arctan u(\pm 1,t), & t>0.
 \end{array}
 \right.
$$
The global existence of $u$ implies that $\theta$ exists for all $t>0$. For any $T>0$, by Lemma \ref{bound-est} we have
\begin{equation}\label{bound-theta}
|\theta(x,t)|\leq \arctan [C(T)],\quad x\in [-1,1],\ t\in [0,T],
\end{equation}
and $\theta(\pm 1,t) = \pm \arctan u(\pm 1,t) \to \pm \frac{\pi}{2}$ as $t\to \infty$.
Moreover, by \eqref{u-n-to-W} we have
$$
\theta(x,t_n +t) \to \Theta(x,t) := \arctan [\mathcal{W}_x (x,t)] \mbox{ in } C^{2+\alpha, 1+\alpha/2}_{loc} ((-1,1)\times \R)
\mbox{ topology}.
$$

1. We claim that $\Theta(x,t)$ is a stationary solution of $\theta_t = \cos^2 \theta \cdot \theta_{xx}$. Without loss of generality
we only need to prove that
$\Theta(x,0)$ is a stationary one. Denote $a:= \Theta_x (0,0),\ b:= \Theta(0,0)$ and $v(x):= ax+b$,
then $v(x)$ is a stationary solution of $\theta_t = \cos^2 \theta \cdot \theta_{xx}$, and the function
$\xi (x,t) := \theta (x,t) -v(x)$ satisfies one of the following boundary conditions:
$$
\begin{array}{ll}
 \mbox{in case }  v(-1) >-\frac{\pi}{2},\ v(1) \geq \frac{\pi}{2}: & \xi(-1,t) < 0,\ \xi (1,t)<0 \mbox{ for all large } t;\\
  \mbox{in case }  v(-1) >-\frac{\pi}{2},\ v(1) < \frac{\pi}{2}:  & \xi(-1,t) < 0,\ \xi(1,t)>0 \mbox{ for all large } t;\\
  \mbox{in case }   v(-1) \leq -\frac{\pi}{2},\ v(1) \geq \frac{\pi}{2}: & \xi(-1,t) > 0,\ \xi(1,t)<0 \mbox{ for all large } t;\\
  \mbox{in case }   v(-1) \leq -\frac{\pi}{2},\ v(1) < \frac{\pi}{2}: & \xi(-1,t) > 0,\ \xi(1,t)>0 \mbox{ for all large } t.
\end{array}
$$
Moreover, $\xi$ satisfies the linear equation $\xi_t = \cos^2 \theta\cdot \xi_{xx}$.
Hence the zero number argument is applied to conclude that $\xi(x,t)$ has no degenerate zeros for all large $t$.
Then using a similar argument as in the proof of \cite[Lemma 2.6]{DM} we conclude that the $\omega$-limit $\Theta(x,t)-v(x)$
of $\theta (x,t)-v(x)$ either satisfies (1) $\Theta(x,t)\equiv v(x)$, or satisfies (2) $\Theta(x,t)\not \equiv v(x)$ and
$\Theta(x,t)-v(x) $ has no degenerate zeros for each $t\in \R$. Case (2), however,
contradicts the definition of $v(x)$. Therefore, only case (1) holds. This prove our claim.

2. Next we show that $\Theta (x,t) \equiv \frac{\pi}{2}x$. In the previous step, we have shown that
$\Theta(x,t)\equiv v(x)\equiv ax +b$ for some  $a,b\in \R$.
If $v(\pm 1)= \pm \frac{\pi}{2}$, then the conclusion is proved.
On the contrary, we assume, without loss of generality that, for some $x_0\in (0,1)$ and some small $\delta >0$, there holds
$\Theta(x,t)\equiv v(x) <\frac{\pi}{2}x- \pi \delta$ in $x\in [x_0,1)$.
For any given small $\epsilon>0$, since
$|\theta(x,t_n) - \Theta(x,0)|\leq \frac12 \pi \delta$ in $[-1+\epsilon, 1-\epsilon]$
when $n$ is sufficiently large, we have
$$
 \arctan u_x (x,t_n) = \theta (x,t_n) \leq \Theta(x,0) + \frac12 \pi \delta = v(x) + \frac12 \pi \delta
 < \frac{\pi}{2} (x - \delta),\quad x\in [-1+\epsilon, 1-\epsilon].
$$
Integrating the above inequalities over $[x_0,1-\epsilon]$ we have
$$
u(1-\epsilon,t_n) \leq u(x_0,t_n) + \int_{x_0}^{1-\epsilon} \tan \Big[\frac{\pi}{2} (x -\delta) \Big] dx= u(x_0, t_n) +
\varphi_0 (1-\epsilon-\delta)-\varphi_0(x_0-\delta).
$$
Taking limit as $n\to \infty$ we have
$$
\mathcal{W}(1-\epsilon,0)\leq \mathcal{W}(x_0,0) + \varphi_0 (1-\epsilon-\delta)-\varphi_0(x_0-\delta).
$$
Combining with \eqref{5.6} we have
$$
\varphi_0(1-\epsilon) \leq \mathcal{W}(x_0,0) + \varphi_0 (1-\epsilon-\delta)-\varphi_0(x_0-\delta).
$$
This is a contradiction as $\epsilon\to 0$. This proves our claim, and so $\Theta(x,t)\equiv \frac{\pi x}{2}$.

3. As an $\omega$-limit of $\theta$, the function $\Theta(x,t)\equiv \frac{\pi}{2}x$ is unique. Hence, $\theta(x,t)\to \frac{\pi}{2}x$ as
$t\to \infty$. Equivalently, $\mathcal{W}_x (x,t)\equiv \tan [\frac{\pi}{2}x]$, and so
$$
\mathcal{W}(x,t) = \varphi_0(x) + K(t).
$$
Since $\mathcal{W}(x,t)$ is an entire solution, by its equation we have
$\mathcal{W}_t(x,t)\equiv \frac{\pi}{2}$, and so $K(t) = \frac{\pi}{2}t +K_0$
for some $K_0 \in \R$. By \eqref{5.6} we actually have $ K_0 \in [0, \frac{\pi}{2}T]$.
In summary,
$$
\mathcal{W}(x,t) = \varphi_0(x)+\frac{\pi}{2}t+ K_0, \qquad x\in(-1,1),\ t\in \R.
$$

\medskip
\noindent
{\it Proof of Theorem} \ref{thm:main}. From above discussion we see that, as $n\to \infty$,
$$
u(x,t+t_n; u_0) - u(0,t_n;\psi) \to \varphi_0(x) +\frac{\pi}{2} t + K_0,
$$
and so
\begin{eqnarray*}
u(x,t+t_n;u_0) - u(0,t_n;u_0) & = & [u(x,t+t_n;u_0) - u(0,t_n;\psi)] - [u(0,t_n;u_0) - u(0,t_n;\psi)]\\
 & \to & \varphi_0(x) + \frac{\pi}{2} t .
\end{eqnarray*}
Since $\{t_n\}$ is an arbitrarily chosen time sequence tending to infinity, we conclude that
$$
u(x,t+s;u_0) - u(0,s;u_0) \to \varphi_0 (x) +\frac{\pi}{2} t,\quad \mbox{ as } s\to \infty.
$$
This proves Theorem \ref{thm:main}. \hfill $\Box$

\begin{remark}\rm
We see from Sections 4 and 5 that the zero number argument plays a key role in the {\it uniform}
interior gradient estimates, which does not rely on the boundedness of $u$ and can be
extended to other problems with unbounded solutions (but only for one dimensional problems).
\end{remark}


\end{document}